\documentclass[reqno,11pt]{amsart}
\usepackage{amssymb}
\usepackage{amsmath}
\usepackage{amsfonts}
\usepackage{hyperref}

\newtheorem{theorem}{Theorem}
\theoremstyle{plain}

\newtheorem{corollary}{Corollary}

\newtheorem{definition}{Definition}
\newtheorem{example}{Example}

\newtheorem{lemma}{Lemma}

\newtheorem{proposition}{Proposition}
\newtheorem{remark}{Remark}

\numberwithin{equation}{section}

\begin{document}
\title[]{Petrov type extension for multivalued contraction mappings}
\author{Hakan Sahin}
\address{Department of Mathematics, Faculty of Engineering and Natural
Sciences, Bursa Technical University, 16310 Y\i ld\i r\i m, Bursa, Turkey}
\email{hakan.sahin@btu.edu.tr}
\author{Mustafa Aslantas}
\address{Department of Mathematics, Faculty of Science, \c{C}ank\i r\i \
Karatekin University, 18100, \c{C}ank\i r\i , Turkey}
\email{maslantas@karatekin.edu.tr}
\author{Ishak Altun}
\address{Department of Mathematics, Faculty of Engineering and Natural
Sciences, K\i r\i kkale University, 71450, Yahsihan, K\i r\i kkale, Turkey}
\email{ishakaltun@yahoo.com}
\subjclass[2000]{Primary 47H10, Secondary 54H25, 47H04}
\keywords{Multivalued contraction, fixed point theorem, multivalued
contracting perimeters of triangles}

\begin{abstract}
In this paper, we introduce the concept of multivalued $\lambda $%
-contracting perimeters of triangles. This concept generalizes the Nadler's
contraction by considering triplets of points instead of pairs. Fundamental
properties of such mappings are analyzed, including their continuity and
their relationship to classical multivalued contractions. By means of a
counterexample, we show that a mapping which satisfies the condition of
multivalued $\lambda $-contracting perimeters of triangles is not
necessarily a multivalued $\lambda $-contraction. We also introduce the
notion of property of forming a triangle. Then, we investigate the relation
between this property and the fact that there is no periodic point of prime
period $2$ for any multivalued mapping. Furthermore, using the new concept
and property mentioned above, we present some fixed point results for
multivalued mappings. Finally, we provide an illustrative and comparative
example.
\end{abstract}

\maketitle

\renewcommand{\thefootnote}{} \footnotetext{$^*$Corresponding author}

\section{Introduction and Preliminaries}

Fixed point theory has a crucial role in many areas of mathematics and has
extensive applications in various disciplines. The Banach contraction
principle is considered to be the origin of fixed point theory in metric
spaces \cite{Banach}. Established by Banach in 1922, it guarantees the
existence and uniqueness of a fixed point for a contraction mapping in a
complete metric space. Its versatility and wide applicability make it an
important result in fixed point theory. Hence, there are many
generalizations of this result in the literature. These generalizations can
be broadly divided into two main classes. First class involves relaxing the
contraction condition. For example, the classical contraction condition,
which requires that the distance between points is reduced by a constant
factor, has been weakened in several directions. Generalized contraction
mappings, such as those that allow for certain types of nonlinearities or
weaker contraction properties, have been studied \cite{B5, B3, B6, B2, B4,
B1}. The second class consists of generalizations obtained by weakening the
topology. In particular, researchers have explored the application of
Banach's result in more general spaces, including nonmetric spaces,
topological spaces, and metric-like structures \cite{T3, T4, T2, T1}. On the
other hand, the extension of the Banach fixed point theorem to multivalued
mappings has been an important area of research. Notably, Nadler's fixed
point theorem generalizes Banach's result to multivalued maps in complete
metric spaces \cite{Nadler} as follows.

\begin{theorem}[\protect \cite{Nadler}]
Let $\left( X,\rho \right) $ be a complete metric space and $T:X\rightarrow
CB(X)$ be a multivalued $\lambda $-contraction on $X$, that is, there exists 
$\lambda \in \lbrack 0,1)$ such that%
\begin{equation}
H\left( Tx,Ty\right) \leq \lambda \rho (x,y)  \label{MLC}
\end{equation}%
for all $x,y\in X$. Then, $T$ has a fixed point where $CB(X)$ is the class
of all closed and bounded subsets of $X$.
\end{theorem}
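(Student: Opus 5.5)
We follow Nadler's classical iterative scheme: build a Cauchy sequence of successive approximations $x_{n+1}\in Tx_n$ and show its limit is a fixed point. If $\lambda=0$, then $H(Tx,Ty)=0$ for all $x,y$, so $T$ is constant, equal to some closed set $A$, and any $a\in A$ satisfies $a\in Ta$. So assume $0<\lambda<1$.

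The basic tool is the following property of the Pompeiu--Hausdorff metric. If $A,B\in CB(X)$, $a\in A$ and $\varepsilon>0$, then there is $b\in B$ with $\rho(a,b)\le H(A,B)+\varepsilon$. This holds because $d(a,B)\le H(A,B)$. We also use that a point $z$ lies in a closed set $A$ if and only if $d(z,A)=0$.

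\emph{Construction of the sequence.} Fix $x_0\in X$ and choose $x_1\in Tx_0$. By the property above, pick $x_2\in Tx_1$ with $\rho(x_1,x_2)\le H(Tx_0,Tx_1)+\lambda$. Inductively, pick $x_{n+1}\in Tx_n$ with
\[
\rho(x_n,x_{n+1})\le H(Tx_{n-1},Tx_n)+\lambda^n\le \lambda\rho(x_{n-1},x_n)+\lambda^n .
\]
Iterating this recursion gives
\[
\rho(x_n,x_{n+1})\le \lambda^n\rho(x_0,x_1)+n\lambda^n .
\]
Since $\sum_n(\lambda^n+n\lambda^n)<\infty$, the series $\sum_n\rho(x_n,x_{n+1})$ converges. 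By the triangle inequality, $(x_n)$ is Cauchy, and completeness gives a limit $x_n\to z$.

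\emph{The limit is a fixed point.} We estimate
\[
d(z,Tz)\le \rho(z,x_{n+1})+d(x_{n+1},Tz)\le \rho(z,x_{n+1})+H(Tx_n,Tz)\le \rho(z,x_{n+1})+\lambda\rho(x_n,z).
\]
The right-hand side tends to $0$, so $d(z,Tz)=0$. Since $Tz$ is closed, $z\in Tz$.

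The main obstacle is the construction step. Because $Tx_n$ need not be compact, the distance $d(x_n,Tx_{n})$ may not be attained, so we cannot simply take a nearest point. The shrinking error terms $\lambda^n$ are what restore summability. The point to verify is that, after iterating the recursion, the accumulated errors $n\lambda^n$ still form a convergent series.
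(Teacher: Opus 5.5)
Your proof is correct. The paper does not prove this statement (it is quoted from Nadler), and your argument is Nadler's classical one: you choose $x_{n+1}\in Tx_n$ with $\rho(x_n,x_{n+1})\le H(Tx_{n-1},Tx_n)+\lambda^n$, get $\rho(x_n,x_{n+1})\le \lambda^n\rho(x_0,x_1)+n\lambda^n$, and conclude from $d(z,Tz)=0$ and the closedness of $Tz$. This is the same scheme the paper adapts in the proof of Theorem~\ref{Theo1}, where it uses the modified error term $\min\{\lambda^n,H(Tx_{n-2},Tx_n)\}$ and likewise sets aside the case $\lambda=0$.
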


It is considered one of the most important classical results in the fixed
point theory, aside from the Banach contraction principle itself. Therefore,
this result continues to be expanded and studied by many researchers in
various contexts \cite{N1, N4, JPS, N2, N3}.

On the other hand, very recently Petrov introduce a new type of mappings
which can be characterized as mappings contracting perimeters of triangles.
The essential difference of the mappings contracting perimeters of triangles
from the contraction mapping in the sense of Banach is based on three points
of the space instead of two. Then, an interesting fixed point theorem was
obtained for such mappings. Before giving the result, let us remind an
important definition for us. Let $X$ be a nonempty set and $T:X\rightarrow X$
be a mapping. The point $x\in X$ is said to be periodic point of period $n$
if $x=T^{n}x$. The prime period of $x$ is the smallest positive $n$ for
which $x=T^{n}x~$[\cite{Prime}, p. 18].

\begin{theorem}[\protect \cite{Petrov}]
Let $\left( X,\rho \right) $ be a complete metric space with $\left \vert
X\right \vert \geq 3$ and $T:X\rightarrow X$ be a mapping such that it does
not possess periodic points of prime period $2$. If $T~$is a mapping
contracting perimeters of triangles on $X$, that is, there exists $\lambda
\in \lbrack 0,1)$ such that 
\begin{equation*}
\rho \left( Tx,Ty\right) +\rho \left( Ty,Tz\right) +\rho \left( Tx,Tz\right)
\leq \lambda \left( \rho (x,y)+\rho (y,z)+\rho (x,z)\right)
\end{equation*}%
for all three pairwise distinct points $x,y,z\in X$, then $T$ has a fixed
point. Moreover, the number of fixed points is at most two.
\end{theorem}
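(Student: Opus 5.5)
We prove Petrov's theorem by Picard iteration, as in the Banach principle, with the contraction applied to consecutive triangles of iterates. Fix $x_0\in X$ and set $x_{n+1}=Tx_n$. If some $x_n$ is a fixed point we are done, so assume $x_{n+1}\neq x_n$ for all $n$. We first show that $x_n,x_{n+1},x_{n+2}$ are pairwise distinct. The pairs $x_n\ne x_{n+1}$ and $x_{n+1}\ne x_{n+2}$ are distinct by assumption. If $x_n=x_{n+2}$, then $T^2x_n=x_n$ while $Tx_n\neq x_n$, so $x_n$ would be a periodic point of prime period $2$, which is excluded.

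Next we get geometric decay of perimeters. Put $p_n=\rho(x_n,x_{n+1})+\rho(x_{n+1},x_{n+2})+\rho(x_n,x_{n+2})$. Applying the hypothesis to the distinct triple $x_n,x_{n+1},x_{n+2}$ gives $p_{n+1}\le\lambda p_n$, hence $p_n\le\lambda^n p_0$. Since $\rho(x_n,x_{n+1})\le p_n$, the series $\sum_n\rho(x_n,x_{n+1})$ converges, so $(x_n)$ is Cauchy. By completeness $x_n\to x^*$.

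The main obstacle is showing $Tx^*=x^*$, because $T$ is not assumed continuous and the hypothesis only applies to pairwise distinct triples. First we handle the case where the sequence hits $x^*$. If $x_n=x^*$ for infinitely many $n$, then since consecutive iterates are distinct and the distances go to zero, one checks that $x^*$ is eventually fixed. Indeed, if $x_n=x_m=x^*$ with $n<m$, the orbit is periodic, so all $x_k$ with $k\ge n$ lie in a finite cycle. A nonconstant cycle keeps its consecutive distances bounded below, which contradicts $\rho(x_k,x_{k+1})\to0$. So we may assume $x_n\neq x^*$ for all large $n$. We can also assume the $x_n$ are pairwise distinct: a repetition would again force a cycle, ruled out as above. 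Then for large $n$ the triple $x^*,x_n,x_{n+1}$ is pairwise distinct, and the hypothesis gives
\[
\rho(Tx^*,x_{n+1})+\rho(x_{n+1},x_{n+2})+\rho(Tx^*,x_{n+2})\le\lambda\bigl(\rho(x^*,x_n)+\rho(x_n,x_{n+1})+\rho(x^*,x_{n+1})\bigr).
\]
The right-hand side tends to $0$, so $x_{n+1}\to Tx^*$ and therefore $Tx^*=x^*$.

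It remains to show there are at most two fixed points. Suppose $x,y,z$ are three distinct fixed points. Then the hypothesis reads $P\le\lambda P$, where $P>0$ is the perimeter of the triangle $x,y,z$. This is impossible since $\lambda<1$. The assumption $|X|\ge3$ is only what makes the condition non-vacuous. We expect the delicate bookkeeping to be entirely in the distinctness step of the limit argument.
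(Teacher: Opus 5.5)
Your proof is correct. The paper does not prove this statement; it only quotes it from Petrov. The natural comparison is therefore with the paper's proof of its multivalued analogue, Theorem~\ref{Theo1}.

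The skeleton is the same in both:
\begin{itemize}
\item iterate;
\item apply the hypothesis to three consecutive iterates;
\item obtain a Cauchy sequence;
\item show the limit $x^*$ is hit only finitely often;
\item apply the inequality to the triple $x^*,x_n,x_{n+1}$.
\end{itemize}
The key decay estimate is genuinely different.

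You track the full perimeter $p_n$. Because $T$ is single-valued, the image of the triangle $(x_n,x_{n+1},x_{n+2})$ is exactly the next triangle. This gives $p_{n+1}\le\lambda p_n$ and the theorem for every $\lambda<1$.

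In the multivalued setting this fails. The third side $\rho(x_{n+1},x_{n+3})$ is not controlled by $H(Tx_n,Tx_{n+2})$, because $x_{n+3}$ is selected to be close to $x_{n+2}$, not to $x_{n+1}$. The paper therefore tracks only two consecutive sides and bounds the third side on the right by the triangle inequality. It pays a factor $2\lambda$, plus the $\lambda^n$ selection errors, which is why Theorem~\ref{Theo1} needs $\lambda<\frac12$. Specialized to single-valued maps, the paper's argument recovers Petrov's theorem only for $\lambda<\frac12$. Your argument is sharper in that setting; the paper's buys applicability to set-valued maps.

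Two minor remarks.
\begin{itemize}
\item In the case $x_n=x^*$ infinitely often, your cycle argument does not show that $x^*$ is ``eventually fixed''. What it correctly shows is that, under your standing assumption, no repetition $x_n=x_m$ with $n<m$ can occur at all, and that is all you need. A more direct route, as in the paper, is $\rho(x^*,Tx^*)=\rho(x_{n_i},x_{n_i+1})\to 0$, which gives $Tx^*=x^*$ immediately.
\item Your argument that there are at most two fixed points (three distinct fixed points would give $P\le\lambda P$ with $P>0$) is the standard one and is fine. The paper does not address this part.
\end{itemize}
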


Also, it was showed that the class of contraction mapping is subclass of the
mappings contracting perimeters of triangles and so, the Banach contraction
principle was generalized in a different way from the results in literature.
In the same article, an example of obtaining an exact generalization of the
Banach contraction principle was given. Following, the applications of
Petrov's idea to well-known contractions in the fixed point literature were
given \cite{PC, PK}. For instance, although a Kannan type contraction $T$
with the constant $\lambda \in \left[ 0,\frac{1}{2}\right) $ on a metric
space $(X,\rho )~$is the mapping such that 
\begin{equation*}
\rho (Tx,Ty)\leq \lambda \left( \rho (x,Tx)+\rho (y,Ty)\right)
\end{equation*}%
for all $x,y\in X$, a generalized Kannan type mapping was defined as follows:

\begin{definition}[\protect \cite{PK}]
Let $\left( X,\rho \right) $ be a metric space with $\left \vert
X\right
\vert \geq 3$ and $T:X\rightarrow X$ be a mapping. If there exists $%
\lambda \in \left[ 0,\frac{2}{3}\right) ~$such that the inequality 
\begin{equation*}
\rho \left( Tx,Ty\right) +\rho \left( Ty,Tz\right) +\rho \left( Tx,Tz\right)
\leq \lambda \left( \rho (x,Tx)+\rho (y,Ty)+\rho (z,Tz)\right)
\end{equation*}%
is satisfied for all three pairwise distinct points $x,y,z\in X$ then, $T$
is said to be generalized Kannan type mapping.
\end{definition}

It was shown that the Kannan type contraction for the constant $\lambda \in %
\left[ 0,\frac{1}{3}\right) $ is a generalized Kannan type mapping. However,
in the same paper, an example was provided to demonstrate that the Kannan
type contraction for the constant $\lambda \in \left[ \frac{1}{3},\frac{1}{2}%
\right) $ may not be a generalized Kannan type mapping. Then, the following
theorem was presented.

\begin{theorem}[\protect \cite{PK}]
\label{K1}Let $\left( X,\rho \right) $ be a complete metric space with $%
\left \vert X\right \vert \geq 3$ and $T:X\rightarrow X$ be a mapping such
that it does not possess periodic points of prime period $2$. If $T~$is a
generalized Kannan type mapping on $X$, then $T$ has a fixed point.
Moreover, the number of fixed points is at most two.
\end{theorem}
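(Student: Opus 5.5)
We follow the Picard iteration, mirroring Petrov's argument for mappings contracting perimeters of triangles. Fix $x_0\in X$ and put $x_{n+1}=Tx_n$. If $x_n=x_{n+1}$ for some $n$, then $x_n$ is a fixed point and we are done. Otherwise consecutive terms differ. Since $T$ has no periodic points of prime period $2$, $x_{n+2}\neq x_n$: indeed $x_{n+2}=x_n$ would make $x_n$ a periodic point of period $2$ that is not fixed. Hence $x_n,x_{n+1},x_{n+2}$ are pairwise distinct for every $n$, and the generalized Kannan inequality applies to this triple.

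The next step is to get geometric decay. Write $d_n=\rho(x_n,x_{n+1})$. The Kannan inequality for $x=x_n,y=x_{n+1},z=x_{n+2}$ gives
\begin{equation*}
d_{n+1}+d_{n+2}+\rho(x_{n+1},x_{n+3})\leq \lambda (d_n+d_{n+1}+d_{n+2}).
\end{equation*}
By the triangle inequality $\rho(x_{n+1},x_{n+3})\le d_{n+1}+d_{n+2}$, so we cannot simply drop it with a good sign. Instead we use the perimeter quantity $p_n=\rho(x_n,x_{n+1})+\rho(x_{n+1},x_{n+2})+\rho(x_n,x_{n+2})$. The left side is $p_{n+1}$, and the right side satisfies $d_n+d_{n+1}+d_{n+2}\le$ something comparable to $p_n+p_{n+1}$. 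Concretely, $d_n+d_{n+1}\le p_n$ and $d_{n+2}\le p_{n+1}/2$; more precisely $2d_{n+2}\le p_{n+1}$, since the two other sides of a triangle together dominate the third. From $p_{n+1}\le \lambda(d_n+d_{n+1})+\lambda d_{n+2}$ with $d_n+d_{n+1}\le p_n$ we get
\begin{equation*}
p_{n+1}(1-\tfrac{\lambda}{2})\le \lambda p_n,\qquad p_{n+1}\le \frac{2\lambda}{2-\lambda}p_n .
\end{equation*}
Here $\frac{2\lambda}{2-\lambda}<1$ exactly when $\lambda<\frac23$, which explains the constant. So $p_n\le q^n p_0$ with $q<1$. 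Then $d_n\le p_n$ is summable and $(x_n)$ is Cauchy, converging to some $x^*$ by completeness.

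We then show $Tx^*=x^*$. This is the main obstacle, because no continuity of $T$ is given. First assume the $x_n$ are pairwise distinct; if $x_n=x_m$ for some $m>n+2$, then the sequence is eventually periodic with contracting perimeters, which forces all distances along the cycle to be $0$, a contradiction. Also assume $x^*\neq x_n$ for infinitely many $n$. Apply the inequality to the triple $x^*,x_n,x_{n+1}$ (pairwise distinct for large $n$):
\begin{equation*}
\rho(Tx^*,x_{n+1})+d_{n+1}+\rho(Tx^*,x_{n+2})\le\lambda(\rho(x^*,Tx^*)+d_n+d_{n+1}).
\end{equation*}
Letting $n\to\infty$ gives $2\rho(Tx^*,x^*)\le\lambda\rho(x^*,Tx^*)$, hence $Tx^*=x^*$ since $\lambda<2$. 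The degenerate cases, where $x^*$ equals infinitely many $x_n$, would make the sequence eventually constant and so yield a fixed point directly.

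Finally, to show there are at most two fixed points, suppose $x,y,z$ are three distinct fixed points. The right side of the inequality is $0$ while the left side is the positive perimeter, a contradiction.
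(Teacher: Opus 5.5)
The paper does not prove this theorem. It is quoted from \cite{PK} as background, so there is no in-paper proof to compare against. Judged on its own, your argument is correct and complete:
\begin{itemize}
\item The no-prime-period-$2$ hypothesis makes every triple $x_n,x_{n+1},x_{n+2}$ pairwise distinct.
\item The estimate $2d_{n+2}\le p_{n+1}$ lets you absorb the $d_{n+2}$ term and turn $p_{n+1}\le\lambda(d_n+d_{n+1}+d_{n+2})$ into $p_{n+1}\le\frac{2\lambda}{2-\lambda}p_n$, whose ratio is $<1$ exactly when $\lambda<\frac23$.
\item Since the $p_n$ are positive and strictly decreasing, no repetition $x_n=x_m$ can occur.
\item The triple $(x^*,x_n,x_{n+1})$ gives $(2-\lambda)\rho(x^*,Tx^*)\le 0$ in the limit, with no continuity needed.
\end{itemize}
The ``at most two fixed points'' part is also right.

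The paper's proof of Theorem~\ref{Theo1} has the same skeleton: a recursion on sums of consecutive distances, then the triple $(x^*,x_n,x_{n+1})$ after discarding the finitely many $n$ with $x_n=x^*$. Your recursion is sharper, and the setting is what allows it.
\begin{itemize}
\item In Theorem~\ref{Theo1}, the third side $\rho(x_{n-1},x_{n+1})$ on the right can only be bounded by the other two sides. This gives the ratio $2\lambda$ and the threshold $\frac12$.
\item In the Kannan setting, the extra right-hand term is $d_{n+2}$, which can be absorbed into the left side. This is exactly what produces the constant $\frac23$ in the definition.
\item Your way of excluding $x_n=x^*$ (strict decrease of perimeters) differs from the paper's finiteness argument for the index set $A$; both work.
\end{itemize}

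Two small remarks on your commentary. First, you \emph{can} drop $\rho(x_{n+1},x_{n+3})\ge 0$ from the left-hand side, but that only yields $(1-\lambda)(d_{n+1}+d_{n+2})\le\lambda d_n$, i.e.\ decay for $\lambda<\frac12$; absorbing $d_{n+2}$ is what reaches the full range. Second, your closing ``degenerate case'' remark is vacuous, since you have already shown the $x_n$ are pairwise distinct.
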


The mapping given in Example 2.6 in \cite{PK} satisfies all the hypotheses
of Theorem \ref{K1}, and since it is not a Kannan type contraction, Theorem %
\ref{K1} is important.

In this paper, taking into account Petrov's new approach, we aim to obtain
fixed point results for multivalued mappings. In this context, we introduce
a new type of mapping called multivalued $\lambda $-contracting perimeters
of triangles. Then, we show that the class of multivalued $\lambda $%
-contractions construct an important subclass of these mappings with some
supporting examples. We also define the property of forming a triangle, thus
weakening the condition that the mapping $T$ must not have a point with
prime period $2$. Moreover, we prove a fixed point theorem for such mappings
and investigate the relation between the result and Nadler's fixed point
theorem. When analyzing the proof of this theorem, the construction of the
iteration sequence is not as classical as in the proof of fixed point
results for multivalued mappings. Finally, we provide an example to show
that our main result is crucial.

\section{Main Result}

We start this section with the definition of multivalued $\lambda $%
-contracting perimeters of triangles.

\begin{definition}
Let $\left( X,\rho \right) $ be a metric space with $\left \vert
X\right
\vert \geq 3$ and $T:X\rightarrow CB(X)$ be a multivalued mapping.
Then, it is said to be multivalued $\lambda $-contracting perimeters of
triangles on $X$ if there exists $\lambda \in \lbrack 0,1)$ such that 
\begin{equation}
H\left( Tx,Ty\right) +H\left( Ty,Tz\right) +H\left( Tx,Tz\right) \leq
\lambda \left( \rho (x,y)+\rho (y,z)+\rho (x,z)\right)  \label{MLCP}
\end{equation}%
for all three pairwise distinct points $x,y,z\in X$.
\end{definition}

\begin{remark}
Note that the points $x,y,z$ must be different. Otherwise, the definition
becomes the definition of multivalued $\lambda $-contraction mapping.
\end{remark}

Now, let $\left( X,\rho \right) $ be a metric space with $\left \vert
X\right \vert \geq 3$. Then, it is clear that every multivalued $\lambda $%
-contraction mapping on $X$ is a multivalued $\lambda $-contracting
perimeters of triangles on $X$. However, the converse may not be true. The
following example shows this fact.

\begin{example}
\label{Ex1}Let $X=\left \{ 0,1,2,3,4,5\right \} $ and define a function $%
\rho :X\times X\rightarrow 
\mathbb{R}
$ by 
\begin{equation*}
\rho (x,y)=\left \{ 
\begin{array}{ccc}
0 & , & x=y \\ 
2 & , & x,y\text{ are both even or odd} \\ 
1 & , & \text{otherwise}%
\end{array}%
\right. .
\end{equation*}%
Then, $\left( X,\rho \right) $ is a metric space. If we define a multivalued
mapping $T:X\rightarrow CB\left( X\right) $ by 
\begin{equation*}
Tx=\left \{ 
\begin{array}{ccc}
\left \{ 2,3\right \} & , & x\in \left \{ 0,1\right \} \\ 
\left \{ 4,5\right \} &  & x\in \left \{ 2,3\right \} \\ 
\left \{ 0,1\right \} & , & x\in \left \{ 4,5\right \}%
\end{array}%
\right. ,
\end{equation*}%
then it is not multivalued $\lambda $-contraction mapping on $X$. If we take 
$x=0$ and $y=3$, since 
\begin{equation*}
H(Tx,Ty)=1~\text{and }\rho (x,y)=1
\end{equation*}%
and so, the mapping $T$ is not multivalued $\lambda $-contraction on $X$.
However, for distinct $x,y,z\in X$ the inequality (MLCP) is satisfied. When
the all cases are analyzed, it can be seen that maksimum of the left side in
(\ref{MLCP}) is $3$ and the minimum of right side in (\ref{MLCP}) is $4$.
Hence, the mapping $T$ is a multivalued $\lambda $-contracting perimeters of
triangles on $X$ for all$~\frac{3}{4}$ $\leq \lambda <1.$
\end{example}

The following propositions are important to show when multivalued $\lambda $%
-contracting perimeters of triangles is multivalued $\lambda $-contraction
mapping.

\begin{proposition}
Every multivalued $\lambda $-contracting perimeters of triangles is
continuous.
\end{proposition}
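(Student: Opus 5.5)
Continuity here is understood with respect to the Pompeiu--Hausdorff metric $H$ on $CB(X)$: we must show that if $x_n\to x$ in $(X,\rho)$, then $H(Tx_n,Tx)\to 0$. The idea is to use a third point of $X$ as an auxiliary vertex of a triangle, so that inequality (\ref{MLCP}) can be applied even though it only involves pairwise distinct points.

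First, fix $x\in X$ and a sequence $x_n\to x$. Terms with $x_n=x$ give $H(Tx_n,Tx)=0$, so we may assume $x_n\neq x$ for all $n$. If $x$ is an isolated point, then $x_n=x$ eventually and there is nothing to prove. Otherwise, since $|X|\geq 3$, choose any point $z\in X$ with $z\neq x$. Because $\rho(x_n,x)\to 0$ and $\rho(z,x)>0$, we have $x_n\neq z$ for all sufficiently large $n$. Hence for large $n$ the points $x_n,x,z$ are pairwise distinct. (If infinitely many $x_n$ equalled $z$, then $z=x$, which is impossible.)

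Next, apply (\ref{MLCP}) to the triple $x_n,x,z$ and drop the two nonnegative terms on the left:
\begin{equation*}
H(Tx_n,Tx)\leq H(Tx_n,Tx)+H(Tx,Tz)+H(Tx_n,Tz)\leq \lambda \bigl(\rho(x_n,x)+\rho(x,z)+\rho(x_n,z)\bigr).
\end{equation*}
The right-hand side tends to $2\lambda\rho(x,z)$, which is not small, so this estimate alone does not give continuity. This is the main obstacle.

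To get around it, choose the auxiliary point close to $x$. Since $x$ is not isolated, for each $n$ we may pick $z=z_n\neq x$ with $z_n\neq x_n$ and $\rho(z_n,x)<1/n$. One option is $z_n=x_m$ for a suitable index $m\neq n$ with $x_m\neq x_n$; alternatively, any point of $X\setminus\{x,x_n\}$ within $1/n$ of $x$ works, and such a point exists because $x$ is an accumulation point. The triple $x_n,x,z_n$ is then pairwise distinct, and
\begin{equation*}
H(Tx_n,Tx)\leq \lambda\bigl(\rho(x_n,x)+\rho(x,z_n)+\rho(x_n,z_n)\bigr)\leq 2\lambda\bigl(\rho(x_n,x)+\rho(x,z_n)\bigr)\to 0,
\end{equation*}
using the triangle inequality $\rho(x_n,z_n)\leq\rho(x_n,x)+\rho(x,z_n)$. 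This gives $H(Tx_n,Tx)\to 0$, so $T$ is continuous at $x$. Since $x$ was arbitrary, $T$ is continuous.
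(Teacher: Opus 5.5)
Your proof is correct and follows essentially the paper's argument. At a non-isolated point you choose auxiliary points $z_n\neq x,x_n$ converging to $x$ (the paper's $y_n$), apply (\ref{MLCP}) to the triple $x_n,x,z_n$, and bound the result by $2\lambda(\rho(x_n,x)+\rho(x,z_n))\to 0$. You are slightly more careful than the paper in explicitly discarding the terms with $x_n=x$, which is needed for the triple to be pairwise distinct; the opening attempt with a fixed $z$ is unnecessary and can be dropped.
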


\begin{proof}
Let $\left( X,\rho \right) $ be a metric space with $\left \vert
X\right
\vert \geq 3$ and $T:X\rightarrow CB(X)$ be a multivalued $\lambda $%
-contracting perimeters of triangles on $X$. If $x_{0}$ is a isolated point
in $X$, then $T$ is continuous at $x_{0}$. Now, let $x_{0}$ be an
accumulation point in $X$. Assume that $\left \{ x_{n}\right \} $ is an
arbitrary sequence in $X$ such that $x_{n}\rightarrow x_{0}$. Since $x_{0}$
is an accumulation point, there exists a sequence $\left \{ y_{n}\right \} ~$%
with $y_{n}\neq x_{n}$ and $y_{n}\neq x_{0}$ for all $n\in 
\mathbb{N}
$ such that $y_{n}\rightarrow x_{0}$ as $n\rightarrow \infty $. Using the
inequality (\ref{MLCP}) we have%
\begin{eqnarray*}
H\left( Tx_{0},Tx_{n}\right) &\leq &H\left( Tx_{0},Tx_{n}\right) +H\left(
Tx_{n},Ty_{n}\right) +H\left( Tx_{0},Ty_{n}\right) \\
&\leq &\lambda \left( \rho (x_{0},x_{n})+\rho (x_{n},y_{n})+\rho
(x_{0},y_{n})\right) \\
&\leq &2\lambda \left( \rho (x_{0},x_{n})+\rho (x_{0},y_{n})\right)
\end{eqnarray*}%
for all $n\in 
\mathbb{N}
.$Taking limit as $n\rightarrow \infty $ we get 
\begin{equation*}
\lim_{n\rightarrow \infty }H\left( Tx_{0},Tx_{n}\right) =0\text{. }
\end{equation*}%
So that, $T$ is continuous on $X$.
\end{proof}

\begin{proposition}
Let $\left( X,\rho \right) $ be a metric space with $\left \vert
X\right
\vert \geq 3$ and $T:X\rightarrow CB(X)$ be multivalued $\lambda $%
-contracting perimeters of triangles on $X$. If $x$ is a accumulation point
of $X$, then the inequality (\ref{MLC}) is satisfied for all $y\in X$.
\end{proposition}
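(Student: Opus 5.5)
Fix an accumulation point $x$ of $X$ and an arbitrary $y\in X$. The goal is $H(Tx,Ty)\leq \lambda \rho (x,y)$. If $y=x$ the inequality is trivial, since $H(Tx,Tx)=0$. So assume $y\neq x$. The plan is to insert a third point $z$ that is very close to $x$ and distinct from both $x$ and $y$, apply (\ref{MLCP}) to the triple $x,y,z$, and let $z\to x$.

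Since $x$ is an accumulation point, there is a sequence $\{z_{n}\}$ in $X$ with $z_{n}\rightarrow x$ and $z_{n}\neq x$ for all $n$. Since $y\neq x$ and $z_{n}\rightarrow x$, we have $z_{n}\neq y$ for all sufficiently large $n$; passing to a tail, we may assume $x,y,z_{n}$ are pairwise distinct for every $n$. Then (\ref{MLCP}) gives
\begin{equation*}
H(Tx,Ty)\leq H(Tx,Ty)+H(Ty,Tz_{n})+H(Tx,Tz_{n})\leq \lambda \left( \rho (x,y)+\rho (y,z_{n})+\rho (x,z_{n})\right) .
\end{equation*}
This alone only yields a bound of order $2\lambda \rho (x,y)$ in the limit, because $\rho (y,z_{n})\rightarrow \rho (x,y)$. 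So I would not discard the term $H(Ty,Tz_{n})$ but use it.

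The key step is a lower bound for the middle term via the triangle inequality for the Hausdorff metric on $CB(X)$:
\begin{equation*}
H(Ty,Tz_{n})\geq H(Tx,Ty)-H(Tx,Tz_{n}).
\end{equation*}
Substituting this gives
\begin{equation*}
2H(Tx,Ty)\leq \lambda \left( \rho (x,y)+\rho (y,z_{n})+\rho (x,z_{n})\right) .
\end{equation*}
The terms $H(Tx,Tz_{n})$ cancel, and no appeal to continuity is even needed. Letting $n\rightarrow \infty $ and using $\rho (x,z_{n})\rightarrow 0$ and $\rho (y,z_{n})\rightarrow \rho (x,y)$ yields $2H(Tx,Ty)\leq 2\lambda \rho (x,y)$, which is (\ref{MLC}).

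The main point to get right is exactly this bookkeeping: dropping terms naively loses a factor of $2$, and the fix is to lower-bound $H(Ty,Tz_{n})$ by the triangle inequality for $H$. The only other care needed is choosing $z_{n}$ distinct from both $x$ and $y$, which the accumulation-point hypothesis together with $y\neq x$ guarantees.
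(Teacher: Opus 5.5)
Your proof is correct. It follows the paper's overall plan: insert points $z_n\to x$ distinct from $x$ and $y$, apply (\ref{MLCP}) to the triple $x,y,z_n$, and let $n\to\infty$. The difference is in how you handle the middle term.

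The paper keeps all three Hausdorff terms and passes to the limit in each one. It invokes the preceding proposition (continuity of $T$) to get $H(Tx,Tx_n)\to 0$ and $H(Ty,Tx_n)\to H(Ty,Tx)$, so $2H(Tx,Ty)$ appears on the left only in the limit. You instead use the triangle inequality for $H$ on $CB(X)$. This gives $2H(Tx,Ty)\le\lambda(\rho(x,y)+\rho(y,z_n)+\rho(x,z_n))$ for each $n$ before any limit is taken, so you only need continuity of $\rho$.

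What your route buys:
\begin{itemize}
\item It is self-contained and does not depend on the continuity proposition. The paper's limit $H(Ty,Tx_n)\to H(Ty,Tx)$ rests on the same estimate $|H(Ty,Tx_n)-H(Ty,Tx)|\le H(Tx,Tx_n)$ anyway, so you make that step explicit and skip the detour through continuity.
\item It yields the non-asymptotic bound $H(Tx,Ty)\le\frac{\lambda}{2}\left(\rho(x,y)+\rho(y,z)+\rho(x,z)\right)$ for every triple of pairwise distinct points. The proposition then follows just by letting $z\to x$.
\end{itemize}

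The paper's version is slightly shorter, since continuity has already been established at that point. Your tail argument that guarantees $z_n\neq y$ is also more explicit than the paper's bare claim that such a sequence exists.
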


\begin{proof}
Let $x$ be an accumulation point of $X$ and $y\in X$. If $y=x$, then the
inequality (\ref{MLC}) is satisfied. Now, let $y\neq x$. Since $x$ is an
accumulation point, there exists a sequence $\left \{ x_{n}\right \} ~$with $%
x_{n}\neq y$ and $x_{n}\neq x$ for all $n\in 
\mathbb{N}
$ such that $x_{n}\rightarrow x$ as $n\rightarrow \infty $. Using the
inequality (\ref{MLCP}) we have%
\begin{equation}
H\left( Tx,Ty\right) +H\left( Ty,Tx_{n}\right) +H\left( Tx,Tx_{n}\right)
\leq \lambda \left( \rho (x,y)+\rho (y,x_{n})+\rho (x,x_{n})\right)
\label{E999}
\end{equation}%
for all $n\in 
\mathbb{N}
$. Since $H\left( Tx,Tx_{n}\right) \rightarrow 0$ and $H\left(
Ty,Tx_{n}\right) \rightarrow H\left( Ty,Tx\right) $ because of continuity of 
$T$, from $\left( \ref{E999}\right) ~$we have 
\begin{equation*}
H\left( Tx,Ty\right) +H\left( Ty,Tx\right) \leq \lambda \left( \rho
(x,y)+\rho (x,y)\right) ,
\end{equation*}%
and so%
\begin{equation*}
H\left( Tx,Ty\right) \leq \lambda \rho (x,y).
\end{equation*}
\end{proof}

\begin{corollary}
Let $\left( X,\rho \right) $ be a metric space with $\left \vert
X\right
\vert \geq 3$ and $T:X\rightarrow CB(X)$ be multivalued $\lambda $%
-contracting perimeters of triangles on $X$. If there is no isolated point
in $X$, then $T $ is a multivalued $\lambda $-contraction mapping.
\end{corollary}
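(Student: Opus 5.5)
The corollary is an immediate consequence of the second proposition, so the plan is simply to apply that proposition pointwise and check that it covers every pair $x,y\in X$.

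Fix arbitrary $x,y\in X$; we must show $H(Tx,Ty)\leq \lambda \rho (x,y)$ with the same constant $\lambda \in \lbrack 0,1)$ that appears in (\ref{MLCP}). Since $X$ has no isolated points, every point of $X$ is an accumulation point of $X$. In particular $x$ is an accumulation point. The preceding proposition then yields that (\ref{MLC}) holds for this $x$ and every $y\in X$, which is exactly the required inequality. Because $x$ was arbitrary, (\ref{MLC}) holds for all $x,y\in X$, and $T$ is a multivalued $\lambda $-contraction.

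Two details deserve a brief comment. First, the hypothesis $\left\vert X\right\vert \geq 3$ is needed for the definition of multivalued $\lambda $-contracting perimeters of triangles to make sense. It is automatic here anyway, since a nonempty metric space without isolated points is infinite. Second, the preceding proposition already treats the case $y=x$ separately, so no case split is needed at this stage. That proposition also uses the continuity of $T$ from the first proposition, which holds on all of $X$.

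There is no genuine obstacle. The only point to verify is that ``no isolated point'' is precisely the statement that every point is an accumulation point of $X$, and this is the definition.
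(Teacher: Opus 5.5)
Your proposal is correct and follows the paper's intended route: the paper states this corollary without a separate proof, as an immediate consequence of the second proposition. That is exactly what you do, applying the proposition at every point, since having no isolated points means every point of $X$ is an accumulation point.
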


Let $X$ be a nonempty set and $T:X\rightarrow P\left( X\right) $ be a
multivalued mapping. The point $x\in X$ is said to be periodic point of
period $n$ if $x\in T^{n}x$. The prime period of $x$ is the smallest
positive $n$ for which $x\in T^{n}x$.

Now, we introduce a new property which is important for our main result.

\begin{definition}
Let $\left( X,\rho \right) $ be a metric space and $T:X\rightarrow CB(X)$ be
a multivalued mapping. If for all $x\in X$, $y\in Tx$ with $x\neq y$ and for
all $\alpha >0$, there exists $z\in Ty$ with $H(Tx,Tz)>0$ such that%
\begin{equation*}
\rho (y,z)\leq H(Tx,Ty)+\alpha ,
\end{equation*}%
then the mapping $T$ is called to have the property of forming a triangle.
\end{definition}

Lemma \ref{L1} shows the property of forming a triangle for a multivalued
mapping $T$ weaken the condition that the mapping $T$ does not possess
periodic points of prime $2$.

\begin{lemma}
\label{L1}Let $\left( X,\rho \right) $ be a metric space and $T:X\rightarrow
CB(X)$ be a multivalued mapping without fixed point. If the mapping $T$ does
not possess periodic points of prime $2$, then $T$ has the property of
forming a triangle.
\end{lemma}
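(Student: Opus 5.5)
The plan is to build $z$ directly from the infimum defining the point-to-set distance, and to get $H(Tx,Tz)>0$ by contradiction with the absence of periodic points of prime period $2$. Throughout, $T^{2}y$ is understood as $\bigcup_{w\in Ty}Tw$, which is the reading under which the paper's definition of a periodic point ($x\in T^{n}x$) makes sense for multivalued maps.

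Fix $x\in X$, $y\in Tx$ with $x\neq y$, and $\alpha>0$. We use the standard fact that for $A,B\in CB(X)$ and $a\in A$ we have $\rho(a,B)\leq H(A,B)$, where $\rho(a,B)=\inf_{b\in B}\rho(a,b)$. Since $y\in Tx$, this gives
\begin{equation*}
\rho(y,Ty)\leq H(Tx,Ty).
\end{equation*}
By the definition of the infimum, and since $\alpha>0$, there is $z\in Ty$ with
\begin{equation*}
\rho(y,z)<\rho(y,Ty)+\alpha\leq H(Tx,Ty)+\alpha .
\end{equation*}
This is the required distance inequality. Note that it holds for every $z\in Ty$ chosen this way, so no further restriction on $z$ is needed here.

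The remaining, and only substantive, step is to show $H(Tx,Tz)>0$ for this $z$. Suppose instead that $H(Tx,Tz)=0$. Since $H$ is a metric on $CB(X)$, this forces $Tx=Tz$. Then:
\begin{itemize}
\item from $y\in Tx=Tz$ and $z\in Ty$ we get $y\in\bigcup_{w\in Ty}Tw=T^{2}y$, so $y$ is a periodic point of period $2$;
\item since $T$ has no fixed point, $y\notin Ty$, so the period $1$ is excluded.
\end{itemize}
Hence $y$ has prime period $2$, contradicting the hypothesis. Therefore $H(Tx,Tz)>0$, and $T$ has the property of forming a triangle.

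The main point needing care is the conclusion $Tx=Tz$ from $H(Tx,Tz)=0$. This is exactly where closedness of the values (membership in $CB(X)$) is used, since the Hausdorff distance is only a metric on closed bounded sets.
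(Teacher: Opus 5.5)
Your proof is correct and follows the paper's argument: you pick $z\in Ty$ almost attaining $\rho(y,Ty)\leq H(Tx,Ty)$, then show that $H(Tx,Tz)=0$ forces $Tx=Tz$ and hence a point of prime period $2$. The differences are cosmetic: you exhibit $y$ as the period-$2$ point (from $z\in Ty$ and $y\in Tz$), whereas the paper uses $z$ (from $z\in Ty\subseteq T^{2}z$), and you spell out the infimum justification for the existence of $z$, which the paper leaves implicit.
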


\begin{proof}
Let $\left( X,\rho \right) $ be a metric space and $T:X\rightarrow CB(X)$ be
a multivalued mapping without fixed point. Assume that the mapping $T$ does
not possess periodic points of prime period $2$. We want to show that $T$
has the property of forming a triangle. Let's take arbitrary $x\in X$, $y\in
Tx$ with $x\neq y$ and $\alpha >0$. In this case, there exists $z\in Ty$
such that%
\begin{equation*}
\rho (y,z)\leq H(Tx,Ty)+\alpha .
\end{equation*}%
To finish the proof, it is enough to demonstrate $H(Tx,Tz)>0$. Assume the
contrary. If $H(Tx,Tz)=0$, we get $Tx=Tz$. Then, we have $y\in Tz.$Hence we
get $z\in Ty\subseteq T^{2}z,$ and so $z\in T^{2}z~$which contradicts the
assumption that $T$ does not possess periodic points of prime period $2$,
since $z$ is not a fixed point of $T$. Hence, $T$ has the property of
forming a triangle.
\end{proof}

The following example shows that reverse of the Lemma \ref{L1} may not be
true.

\begin{example}
Let $X=\left \{ 0,1,2,3,4,5,6\right \} $. Consider the metric $\rho $ in
Example \ref{Ex1}. Now, we define a multivalued mapping $T:X\rightarrow
CB(X) $ by 
\begin{equation*}
Tx=\left \{ 
\begin{array}{ccc}
\left \{ 1,2,3,4\right \} & , & x=0 \\ 
\left \{ 2,3,4,5\right \} & , & x=1 \\ 
\left \{ 3,4,5,6\right \} & , & x=2 \\ 
\left \{ 4,5,6,0\right \} & , & x=3 \\ 
\left \{ 5,6,0,1\right \} & , & x=4 \\ 
\left \{ 6,0,1,2\right \} & , & x=5 \\ 
\left \{ 0,1,2,3\right \} & , & x=6%
\end{array}%
\right. .
\end{equation*}%
Then, the mapping $T$ has the property of forming a triangle. Indeed, let $%
x\in X$ and $y\in Tx$, we have $H\left( Tx,Ty\right) =1$. Also, we can find $%
z\in Ty$ such that $H(Tx,Tz)>0$ and $\rho (y,z)=1$ since $Ty$ includes both
even and odd numbers. In this case, we have 
\begin{equation*}
\rho (y,z)\leq H(Tx,Ty)+\alpha
\end{equation*}%
for $\alpha >0$. On the other hand, $T$ has a periodic point of prime period 
$2$.
\end{example}

Now, we give our main result.

\begin{theorem}
\label{Theo1}Let $\left( X,\rho \right) $ be a complete metric space with $%
\left \vert X\right \vert \geq 3$ and $T:X\rightarrow CB(X)$ be a
multivalued $\lambda $-contracting perimeters of triangles on $X$ with $%
\lambda \in \left[ 0,\frac{1}{2}\right) $. If the mapping $T~$has the
property of forming a triangle, then $T$ has a fixed point.
\end{theorem}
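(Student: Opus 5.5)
The plan is to argue by contradiction: assume $T$ has no fixed point, and build an orbit sequence whose consecutive triples are pairwise distinct, so that (\ref{MLCP}) can be applied along the sequence. Fix a summable sequence of tolerances, say $\alpha_n=c^n$ with some $c\in(0,1)$. Take any $x_0\in X$ and $x_1\in Tx_0$. Since $x_0\notin Tx_0$, we have $x_1\neq x_0$. Inductively, given $x_n\in Tx_{n-1}$ with $x_n\neq x_{n-1}$, apply the property of forming a triangle to the pair $x_{n-1}$, $y=x_n$ with $\alpha=\alpha_n$. This yields $x_{n+1}\in Tx_n$ with
\begin{equation*}
H(Tx_{n-1},Tx_{n+1})>0\quad\text{and}\quad \rho(x_n,x_{n+1})\leq H(Tx_{n-1},Tx_n)+\alpha_n .
\end{equation*}
Then $x_{n+1}\neq x_n$ because $x_n$ is not a fixed point. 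Also $x_{n+1}\neq x_{n-1}$, since $H(Tx_{n-1},Tx_{n+1})>0$ forces $Tx_{n-1}\neq Tx_{n+1}$. Hence every triple $x_{n-1},x_n,x_{n+1}$ consists of pairwise distinct points. This non-classical choice of the iteration is exactly what the property of forming a triangle is designed to make possible.

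Next comes the key estimate. Put $d_n=\rho(x_{n-1},x_n)$ and $s_n=d_n+d_{n+1}$. Apply (\ref{MLCP}) to the distinct triple $x_{n-2},x_{n-1},x_n$ and drop the nonnegative term $H(Tx_{n-2},Tx_n)$:
\begin{equation*}
H(Tx_{n-2},Tx_{n-1})+H(Tx_{n-1},Tx_n)\leq \lambda\bigl(d_{n-1}+d_n+\rho(x_{n-2},x_n)\bigr)\leq 2\lambda (d_{n-1}+d_n),
\end{equation*}
where the last step is the triangle inequality. Combining this with the two choice inequalities $d_n\leq H(Tx_{n-2},Tx_{n-1})+\alpha_{n-1}$ and $d_{n+1}\leq H(Tx_{n-1},Tx_n)+\alpha_n$ gives
\begin{equation*}
s_n\leq 2\lambda\, s_{n-1}+\alpha_{n-1}+\alpha_n .
\end{equation*}
This is where $\lambda<\frac12$ enters. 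Using only the single term $H(Tx_{n-1},Tx_n)$ would give a two-step recursion that contracts only for $\lambda<\frac14$, so pairing two consecutive distances is the essential trick. Since $2\lambda<1$ and $\alpha_n$ decays geometrically, iterating the recursion shows that $s_n$ is bounded by a geometric-type sequence (a convolution of $(2\lambda)^k$ with $c^k$). Hence $\sum_n d_n\leq\sum_n s_n<\infty$, and $\{x_n\}$ is Cauchy. By completeness, $x_n\to x^*$ for some $x^*\in X$.

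Finally, the proposition on continuity shows that every multivalued $\lambda$-contracting perimeters of triangles is continuous, so $H(Tx_n,Tx^*)\to0$. Therefore
\begin{equation*}
\operatorname{dist}(x^*,Tx^*)\leq \rho(x^*,x_{n+1})+H(Tx_n,Tx^*)\to 0 .
\end{equation*}
Since $Tx^*$ is closed, $x^*\in Tx^*$, which contradicts the assumption that $T$ has no fixed point. I expect the main obstacle to be the estimate in the second paragraph: guaranteeing pairwise distinctness at every step while keeping the error terms $\alpha_n$ summable through the recursion. The rest is routine.
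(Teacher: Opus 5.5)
Your proof is correct and follows essentially the same route as the paper. You build the orbit using the property of forming a triangle with summable tolerances, pair consecutive distances to get $s_n\le 2\lambda s_{n-1}+(\text{error})$, and conclude by completeness. The only differences are minor simplifications: you take plain tolerances $c^n$ and discard $H(Tx_{n-2},Tx_n)$, instead of the paper's choice $\alpha=\min\{\lambda^n,H(Tx_{n-2},Tx_n)\}$; and you close with the continuity proposition, rather than the paper's argument that only finitely many $x_n$ equal $x^*$ followed by the perimeter condition applied to $x_n,x_{n+1},x^*$.
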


\begin{proof}
Except for the obvious situation, we can assume that $\lambda >0$ in the
proof. Let $x_{0}\in X$ and $x_{1}\in Tx_{0}$. If $x_{0}=x_{1}$, then the
proof is finished. Hence, we assume that $x_{0}\neq x_{1}$. Then, since $T$
has the property of forming a triangle, there exists $x_{2}\in Tx_{1}$ with $%
H(Tx_{0},Tx_{2})>0$, and so $x_{0}\neq x_{2}$ such that 
\begin{equation}
\rho (x_{1},x_{2})\leq H(Tx_{0},Tx_{1})+\lambda .  \label{E1}
\end{equation}%
Now, we can consider the condition $x_{1}\neq x_{2}$ (otherwise the proof is
finished). Using the property of forming a triangle of $T$ there exists $%
x_{3}\in Tx_{2}~$with $H(Tx_{1},Tx_{3})>0$, and so $x_{1}\neq x_{3}$ such
that 
\begin{equation}
\rho (x_{2},x_{3})\leq H(Tx_{1},Tx_{2})+\min \left \{ \lambda
^{2},H(Tx_{0},Tx_{2})\right \} .  \label{E2}
\end{equation}

Continuing this process, we can construct a sequence $\left \{
x_{n}\right
\} $ whose three consecutive terms of the sequence are
different from each other with $H\left( Tx_{n-2},Tx_{n}\right) >0$ such that%
\begin{equation*}
\rho (x_{1},x_{2})\leq H(Tx_{0},Tx_{1})+\lambda
\end{equation*}%
and 
\begin{equation}
\rho (x_{n},x_{n+1})\leq H(Tx_{n-1},Tx_{n})+\min \left \{ \lambda
^{n},H(Tx_{n-2},Tx_{n})\right \}  \label{E4}
\end{equation}%
for all $n\geq 2$. Using inequalities $\left( \ref{E1}\right) $ and $\left( %
\ref{E2}\right) $, we have 
\begin{eqnarray}
\rho (x_{1},x_{2})+\rho (x_{2},x_{3}) &\leq &H(Tx_{0},Tx_{1})+\lambda
+H(Tx_{1},Tx_{2})  \notag \\
&&+\min \left \{ \lambda ^{2},H(Tx_{0},Tx_{2})\right \}  \notag \\
&\leq &H(Tx_{0},Tx_{1})+H(Tx_{1},Tx_{2})+H(Tx_{0},Tx_{2})+\lambda  \notag \\
&\leq &\lambda \left( \rho (x_{0},x_{1})+\rho (x_{1},x_{2})+\rho
(x_{0},x_{2})\right) +\lambda  \notag \\
&\leq &2\lambda \left( \rho (x_{0},x_{1})+\rho (x_{1},x_{2})\right) +\lambda
.  \label{E6}
\end{eqnarray}%
In a similar way, from the inequality $\left( \ref{E4}\right) $, we get%
\begin{eqnarray}
\rho (x_{2},x_{3})+\rho (x_{3},x_{4}) &\leq &H(Tx_{1},Tx_{2})+\min \left \{
\lambda ^{2},H(Tx_{0},Tx_{2})\right \}  \notag \\
&&+H(Tx_{2},Tx_{3})+\min \left \{ \lambda ^{3},H(Tx_{1},Tx_{3})\right \} 
\notag \\
&\leq &H(Tx_{1},Tx_{2})+H(Tx_{2},Tx_{3})+H(Tx_{1},Tx_{3})+\lambda ^{2} 
\notag \\
&\leq &\lambda \left( \rho (x_{1},x_{2})+\rho (x_{2},x_{3})+\rho
(x_{1},x_{3})\right) +\lambda ^{2}  \notag \\
&\leq &2\lambda \left( \rho (x_{1},x_{2})+\rho (x_{2},x_{3})\right) +\lambda
^{2}.  \label{E7}
\end{eqnarray}%
Hence, we get 
\begin{equation*}
\rho (x_{n},x_{n+1})+\rho (x_{n+1},x_{n+2})\leq 2\lambda \left( \rho
(x_{n-1},x_{n})+\rho (x_{n},x_{n+1})\right) +\lambda ^{n}
\end{equation*}%
for all $n\geq 1$. Hence, we conclude that 
\begin{eqnarray*}
\rho (x_{n},x_{n+1})+\rho (x_{n+1},x_{n+2}) &\leq &2\lambda \left( \rho
(x_{n-1},x_{n})+\rho (x_{n},x_{n+1})\right) +\lambda ^{n} \\
&\leq &2\lambda \left \{ 2\lambda \left( \rho (x_{n-2},x_{n-1})+\rho
(x_{n-1},x_{n})\right) +\lambda ^{n-1}\right \} \\
&&+2\lambda ^{n} \\
&=&\left( 2\lambda \right) ^{2}\left( \rho (x_{n-2},x_{n-1})+\rho
(x_{n-1},x_{n})\right) +4\lambda ^{n} \\
&\leq &\left( 2\lambda \right) ^{2}\left \{ 2\lambda \left( \rho
(x_{n-3},x_{n-2})+\rho (x_{n-2},x_{n-1})\right) +\lambda ^{n-2}\right \} \\
&&+8\lambda ^{n} \\
&=&\left( 2\lambda \right) ^{3}\left( \rho (x_{n-3},x_{n-2})+\rho
(x_{n-2},x_{n-1})\right) +8\lambda ^{n} \\
&\leq &\left( 2\lambda \right) ^{3}\left \{ 2\lambda \left( \rho
(x_{n-4},x_{n-3})+\rho (x_{n-3},x_{n-2})\right) +\lambda ^{n-3}\right \} \\
&&+8\lambda ^{n} \\
&=&\left( 2\lambda \right) ^{4}\left( \rho (x_{n-4},x_{n-3})+\rho
(x_{n-3},x_{n-2})\right) +16\lambda ^{n} \\
&&\vdots \\
&\leq &\left( 2\lambda \right) ^{n}\left( \rho (x_{0},x_{1})+\rho
(x_{1},x_{2})\right) +2^{n}\lambda ^{n}
\end{eqnarray*}%
for all $n\geq 1$. Hence, we have 
\begin{equation*}
\sum \limits_{n=1}^{\infty }\rho (x_{n},x_{n+1})\leq \sum
\limits_{n=1}^{\infty }\left( 2\lambda \right) ^{n}\left( \rho
(x_{0},x_{1})+\rho (x_{1},x_{2})+1\right) <\infty .
\end{equation*}

So that, $\left \{ x_{n}\right \} $ is a Cauchy sequence. Because of the
completeness of the space $(X,\rho )$, there exist a point $x^{\ast }$ in $X$
such that $x_{n}\rightarrow x^{\ast }$. Let define a set 
\begin{equation*}
A=\{n_{i}:x_{n_{i}}=x^{\ast }\text{ for }i\in 
\mathbb{N}
\}.
\end{equation*}%
First, we claim that $A$ is finite set. Assume the contrary. Then, since $%
x_{n}$ is not a fixed point of the mapping $T$ for all $n\in 
\mathbb{N}
$, we have 
\begin{equation*}
0<\rho \left( x_{n_{i}},Tx_{n_{i}}\right) =\rho (x^{\ast },Tx^{\ast })
\end{equation*}%
for all $n_{i}\in A$. Also, for all $n_{i}\in A$ we have 
\begin{equation*}
\rho (x^{\ast },Tx^{\ast })=\rho \left( x_{n_{i}},Tx_{n_{i}}\right) \leq
\rho \left( x_{n_{i}},x_{n_{i}+1}\right) \rightarrow 0
\end{equation*}%
as $n_{i}\rightarrow \infty $ which is a contradiction. Hence, $A$ is finite
set. Now, let 
\begin{equation*}
n_{0}=\left \{ 
\begin{array}{ccc}
0 & , & A=\emptyset \\ 
&  &  \\ 
\max A & , & A\neq \emptyset%
\end{array}%
\right. .
\end{equation*}%
Then, we have $x_{n}\neq x^{\ast }$ for all $n>n_{0}$. Hence, we get 
\begin{eqnarray*}
\rho \left( x_{n+1},Tx^{\ast }\right) &\leq &H(Tx_{n},Tx^{\ast
})+H(Tx_{n},Tx_{n+1})+H(Tx_{n+1},Tx^{\ast }) \\
&\leq &\lambda (\rho (x_{n},x^{\ast })+\rho (x_{n},x_{n+1})+\rho
(x_{n+1},x^{\ast }))
\end{eqnarray*}%
for all $n>n_{0}$. Taking limit $n\rightarrow \infty $ in the last
inequality we have 
\begin{equation*}
\rho \left( x^{\ast },Tx^{\ast }\right) =0.
\end{equation*}%
So that, we get $x^{\ast }\in \overline{Tx^{\ast }}=Tx^{\ast }$. That is, $%
x^{\ast }$ is a fixed point of $T$.
\end{proof}

Using the Lemma \ref{L1} we can obtain the following result from Theorem \ref%
{Theo1}.

\begin{corollary}
Let $\left( X,\rho \right) $ be a complete metric space with $\left \vert
X\right \vert \geq 3$ and $T:X\rightarrow CB(X)$ be a multivalued $\lambda $%
-contracting perimeters of triangles on $X$ with $\lambda \in \left[ 0,\frac{%
1}{2}\right) $. If the mapping $T~$does not possess periodic points of prime 
$2$, then $T$ has a fixed point.
\end{corollary}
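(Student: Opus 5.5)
The plan is a case split followed by a direct appeal to the results already established. Fix a complete metric space $(X,\rho)$ with $|X|\geq 3$ and a multivalued $\lambda$-contracting perimeters of triangles $T:X\rightarrow CB(X)$ with $\lambda\in\left[0,\frac{1}{2}\right)$, and assume $T$ has no periodic point of prime period $2$. We distinguish two cases according to whether $T$ already has a fixed point.

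\emph{First case.} If $T$ has a fixed point, there is nothing to prove.

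\emph{Second case.} Suppose $T$ has no fixed point. Then every hypothesis of Lemma \ref{L1} holds: $T$ is a multivalued mapping into $CB(X)$, it has no fixed point, and it has no periodic points of prime period $2$. Lemma \ref{L1} then says that $T$ has the property of forming a triangle. At this point every hypothesis of Theorem \ref{Theo1} is satisfied: completeness, $|X|\geq 3$, the contracting-perimeters inequality (\ref{MLCP}) with $\lambda<\frac{1}{2}$, and the triangle-forming property. Theorem \ref{Theo1} therefore produces a fixed point of $T$. This contradicts the assumption of the second case, so the second case cannot occur, and $T$ has a fixed point.

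The only delicate step is matching the hypotheses. Lemma \ref{L1} requires the extra assumption that $T$ has no fixed point, which is exactly why the case split is needed. Inside the proof of Lemma \ref{L1}, the absence of fixed points is used to conclude that $z\in T^{2}z$ with $z\notin Tz$ really gives a point of prime period $2$. I would check that the notion of "periodic point of prime period $2$" in the corollary is the multivalued one ($x\in T^{2}x$, $x\notin Tx$) defined just before Lemma \ref{L1}, so that the lemma applies verbatim. No new estimates are required.
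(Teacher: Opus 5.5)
Your proposal is correct and matches the paper's proof: the paper also assumes, for contradiction, that $T$ has no fixed point, uses Lemma~\ref{L1} to get the property of forming a triangle, and then applies Theorem~\ref{Theo1} to reach a contradiction, which is exactly your second case. Your check that ``prime period $2$'' means the multivalued notion ($x\in T^{2}x$, $x\notin Tx$) is also consistent with how the paper uses it in Lemma~\ref{L1}.
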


\begin{proof}
Assume that $T~$does not possess periodic points of prime $2$. We want to
show that $T$ has a fixed point. Assume the contrary. In this case, from
Lemma \ref{L1}, we conclude $T~$has the property of forming a triangle.
Hence, all hypotheses of Theorem \ref{Theo1} hold, and so $T$ must have a
fixed point. This is a contradiction.
\end{proof}

The following example is significant in that it demonstrates that Nadler's
result is not applicable, while our main theorem could still be applied.

\begin{example}
Consider $X=\left \{ 0,1,4,5,6\right \} $ with the usual metric $\rho $ and
let $T:X\rightarrow CB(X)$ be a multivalued mapping defined by%
\begin{equation*}
Tx=\left \{ 
\begin{array}{ccc}
\left \{ 0\right \} & , & x=0 \\ 
\left \{ 1\right \} & , & x=1 \\ 
\left \{ 0,1\right \} & , & x=4,5,6%
\end{array}%
\right. .
\end{equation*}%
Then, $T$ is not multivalued $\lambda $-contraction on $X$, since $%
H(T0,T1)=\rho (0,1)$. On the other hand, by the routine calculation we can
see that $T$ is a multivalued $\lambda $-contracting perimeters of triangles
on $X$ with $\lambda =\frac{3}{8}$. Also, $T~$does not possess periodic
points of prime $2$, and so it has the property of forming a triangle.
Hence, since all hypotheses of Theorem \ref{Theo1} are satisfied, $T$ has a
fixed point.
\end{example}

\section{Conclusion}

Our main result established that if $\lambda <\frac{1}{2}$, then any
multivalued $\lambda $-contracting perimeters of triangles mapping in a
complete metric space necessarily possesses a fixed point. We also emphasize
the significance of the restriction by showing, through a constructed
example, that if $\lambda \geq \frac{3}{4}$, such maps may not have a fixed
point. However, the existence of fixed points for multivalued $\lambda $%
-contracting perimeters of triangles mappings remains an open problem in the
case $\lambda \in \left[ \frac{1}{2},\frac{3}{4}\right) $. Determining
whether a fixed point always exists under this condition is an interesting
question for future research.

\begin{center}
\textbf{Author contributions}
\end{center}

H. S., M. A. and I.A. equally contributed to the development of the paper.

\begin{center}
\textbf{Funding}
\end{center}

There is no funding for the research article.

\begin{center}
\textbf{Data availability}
\end{center}

No datasets were generated or analyzed during the current study.

\begin{center}
\textbf{Conflict of interest}
\end{center}

The authors have no conflict of interest to declare that are relevant to the
content of this article.

\end{document}